\theoremstyle{plain}
\newtheorem{thm}{Theorem}
\newtheorem{lem}[thm]{Lemma}
\theoremstyle{definition}
\renewcommand{\leq}{\leqslant}
\begin{document}

\title{Stasheff polytope as a sublattice of permutohedron}
\author {K. Adaricheva}
\address{Department of Mathematical Sciences, Yeshiva University,
245 Lexington ave.,
New York, NY 10016, USA}
\email{adariche@yu.edu}

\keywords{Permutahedron, associahedron, Stasheff polytope, permutation lattice, Tamari lattice}
\subjclass[2010]{05A05, 06B99, 52B05}

\begin{abstract}
An assosiahedron $\mathcal{K}^n$, known also as Stasheff polytope, is a multifaceted combinatorial object, which, in particular, can be realized as a convex hull of certain points in $\mathbf{R}^{n}$, forming $(n-1)$-dimensional polytope\footnotemark. 

A permutahedron $\mathcal{P}^n$ is a polytope of dimension $(n-1)$ in $\mathbf{R}^{n}$ with vertices forming various permutations of $n$-element set. There exists well-known orderings of vertices of $\mathcal{P}^n$ and $\mathcal{K}^n$ that make these objects into lattices: the first known as permutation lattices, and the latter as Tamari lattices. We provide a new proof to the statement that the vertices of $\mathcal{K}^n$ can be naturally associated with particular vertices of $\mathcal{P}^n$ in such a way that the corresponding lattice operations are preserved.
In lattices terms, Tamari lattices are sublattices of permutation lattices. The fact was established in 1997 in paper by Bjorner and Wachs, but escaped the attention of lattice theorists. Our approach to the proof is based on presentation of points of an associahedron $\mathcal{K}^n$ via so-called bracketing functions. The new fact that we establish is that the embedding preserves the height of elements.
\end{abstract}

\maketitle

\section{Introduction} 
\footnotetext[1]{We use different indexing of associahedron than one that appears in the literature. This serves purposes of current paper.}
Both permutation lattices $S_n$ and Tamari lattices $T_n$ were discussed in \cite{BB} as instances of application of Newman's method of ``positive" moves performed on the words of some fixed length: $S_n$ is an outcome when the positive move switches the neighboring letters $xy$ in a $n$-letter word, while $T_n$ is build from applying shifting of brackets from $((xy)z)$ to $(x(yz))$ in bracketed sequence of $(n+1)$ letters.

Both types of lattices have their counterparts in geometry: permutation lattices correspond to permutahedra and Tamari lattices to associahedra. By the definition, a \emph{permutohedron} $\mathcal{P}^n$ of order $n$ is $(n-1)$-dimensional popytope embedded in an $n$-dimensional space, the vertices of which are formed by permuting the coordinates of vector $(1,2, \dots, n)$, see more details in \cite{K}. Thus, the permutation lattice $S_n$ is a lattice defined on vertices of the permutahedron $\mathcal{P}^n$.

\emph{An associahedron} $\mathcal{K}^n$, also known as \emph{Stasheff polytope}, can be defined as a convex polytope of dimension $n-1$, whose  vertices correspond to binary trees with $(n+1)$ leaves, or similarly, to bracketed sequences of $(n+1)$ letters, or similarly, to triangulations of a $(n+2)$-gon, see \cite{FR,GKZ,S63}. Thus, the Tamari lattice $T_n$ is a lattice on vertices of $\mathcal{K}^{n}$.

There are many realizations of Stasheff polytope known in the literature, see the references in \cite{FR}, and the combinatorial studies of associahedra is quite intensive, due to important role played by it in combinatorics, homotopy theory, study of operads, analysis of real moduli spaces and other branches of mathematics \cite{BB09,D,GKZ,L,LR98,S63,S70, S07}.  

The connections between permutohedra and associahedra were discovered in algebraic combinatorics: in \cite{BW97}, using representation of  binary trees suggested in \cite{P86,K93}, a natural order-preserving map is constructed from $\mathcal{P}^n$ onto $\mathcal{K}^{n}$ in such a way that a particular sublattice of $\mathcal{P}^n$ is lattice-isomorphic to the image. This, in particular, implies that $\mathcal{K}^{n}$ is a sublattice of $\mathcal{P}^n$. In \cite{R06}, it was observed that the mapping from \cite{BW97} is actually a lattice homomorphism, which brought to the further studies of congruences on lattices of Coxeter groups.

In our paper, we provide a new proof to the statement that Tamari lattices are sublattices of permutation lattices. 
We use the different approach, which is based on representation of Tamari lattice $T_n$ via so-called 
``bracketing" functions defined on $n$-element set. In our main theorem we show the natural embedding of bracketing functions into set of permutations on the same set that preserves the lattice operations. 

Quite surprisingly, a fundamental connection between permutation lattices and Tamari lattices were not noticed in lattice literature till now. As the result, both series of lattices $S_n$ and $T_n$ had lives of their own, but shared somewhat equal discoveries. 

First, they both had an order of non-trivial nature, thus, it took effort to show that the order is, indeed, a lattice: permutation lattices in \cite{GR} and Tamari lattices in \cite{FT} (also, shorter proof for Tamari lattices in \cite{HT}). 

Secondly, there were disjoint discoveries (\cite{DC} and \cite{G}) that both satisfy \emph{the semidistributive laws} $SD_\vee$ and $SD_\wedge$. 

Finally, an even finer structure of \emph{bounded} lattice was revealed in both: first, in \cite{Gey}, the methods of concept analysis were applied to show that $T_n$ are bounded.
\cite{C} followed with the same discovery for $S_n$, applying techniques influenced by \cite{Gey}. This result was generalized further to lattices of finite Coxeter groups in \cite{CBM}, where a new technique was applied, centered at Day's doubling construction. Using this new technique the result about boundedness of $T_n$ was also reproved in \cite{CB}.

The result of \cite{BB} puts to an end the mystery of so close a resemblance between two classes of lattices.
As we mentioned before, $T_n$ turns out to be a natural sublattice of $S_n$, thus, all universal properties of the $S_n$
are inherited in $T_n$. Of course, among such are both semidistributive laws, but also the boundedness, which is preserved for sublattices.

A new feature of embedding that was not noticed before is that $\phi: T_n\rightarrow S_n$ defined in the proof of Theorem \ref{Main} preserves the height of elements; in other words, for every element $a \in T_n$, the length of the longest chain connecting it with $0$ is the same as for its image $\phi(a)\in S_n$. In particular, $\phi$ is an $0,1$-embedding preserving atoms.

The lattice terminology and results can be found in monographs \cite{FJN,Gr}.

We note that no satisfactory description exists for the class of finite bounded lattices, which differs this class from a broader class of \emph{lower bounded} lattices. The best description of this class is still through the Day's original doubling construction: every finite bounded lattice can be obtained from a distributive lattice by a series of doublings of intervals \cite{Day}. Since the class of finite bounded lattices is a pseudo-variety, i.e. it is closed with respect to finite direct products, sublattices and homomorphic images, we suggest the following

{\bf Conjecture.} \emph{The class of finite bounded lattices $\mathbf{B_f}$ coincides with the pseudo-variety generated by all permutation lattices:} $\mathbf{B_f}=\underline{\mathbf{H}} \underline{\mathbf{S}}(S_n:n\in \omega).$

The operator $\underline{\mathbf{P_f}}$ of taking the finite direct products is not necessary here, since a finite product of permutation lattices is a sublattice of another permutation lattice.

This conjecture is raised by the author during the problem session of AMS meeting N1069, special section on Universal Algebra and Order, in Iowa City, March 18-20, 2011. 

\section{Preliminaries}

We will use the so-called \emph{position order} on the set of permutations of $n$-element set $\mathbf{n}=\{1,2,\dots,n\}$, see \cite{BB}. For each permutation $\sigma=\langle s_1,s_2,\dots, s_n\rangle$ on $\mathbf{n}$, we define $I(\sigma)$ as the set of inversions, i.e. pairs $(s_i,s_j)$, where $i<j$, while $s_i>s_j$. Then $S_n$ can be defined as the collection of $I(\sigma)$ ordered by inclusion.

As for the Tamari lattices, we use an equivalent representation that avoids direct usage of bracketing \cite{FT}, see  also Exercise 26 in \cite{Gr}. Here is a quick description of the transfer from the regular bracketing of $(n+1)$ letter word to a bracketing function.

Start from some regular bracketing of  word $x_0...x_n$ (it has also 
representation by the binary tree with leaves $x_0,...,x_n$). 
Replace consistently all brackets (AB) by A(B). As the resut, one obtains so-called right-bracketing of the word.
For example, $((x_0x_1)(x_2x_3))$ is replaced by $x_0(x_1)(x_2(x_3))$.

In the right bracketing, every letter, except $x_0$, has a unique opening bracket.
If for letter $x_i$ that unique opening bracket has the closing bracket after 
$x_j$, 
then we define $E(i)=j$, in the corresponding bracketing function.

For example, the bracketing $((ab)((cd)e))$ on $5$-element word corresponds to bracketing function $F$ on $\mathbf{4}=\{1,2,3,4\}$:
$F(1)=4,F(2)=2,F(3)=4,F(4)=4$; while the bracketing $((a((bc)d))e)$ corresponds to bracketing function $E$:
$E(1)=3,E(2)=2,E(3)=3,E(4)=4$. 

Now, define $T_n$ as the set of so-called \emph{bracketing} functions $E: \mathbf{n}\rightarrow \mathbf{n}$ that satisfy two properties:
\begin{itemize}
\item[(E1)] for each $1\leq k \leq n$, $k \leq E(k)$;
\item[(E2)] if $k \leq j\leq E(k)$, then $E(j)\leq E(k)$.
\end{itemize}

The order on these functions that forms lattice $T_n$ is point-wise: $E\leq F$, if $E(k)\leq F(k)$, for each $1\leq k\leq n$. In the example above we do have $E\leq F$.

\section{Main result}

We will prove a series of lemmas that lead to the following
\begin{thm}\label{Main}
For every $n$, $T_n\leq S_n$. Moreover, the embedding can be done preserving the height of elements.
\end{thm}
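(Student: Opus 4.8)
The plan is to build the embedding directly on inversion sets. Following Section~2, identify $S_n$ with its family of inversion sets ordered by inclusion; writing an inversion of a permutation as a pair $(a,b)$ with $a<b$ (recording that the larger entry $b$ precedes $a$), a set $X$ of such pairs is an inversion set exactly when, for all $a<b<c$, it is transitively closed, $(a,b),(b,c)\in X\Rightarrow(a,c)\in X$, and biconvex, $(a,c)\in X\Rightarrow(a,b)\in X$ or $(b,c)\in X$. For $E\in T_n$ put
\[
\phi(E):=\{(a,b): 1\leq a<b\leq E(a)\},
\]
so that the $a$-th ``row'' $\{b:(a,b)\in\phi(E)\}$ is the interval $\{a+1,\dots,E(a)\}$ (empty when $E(a)=a$). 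The first step is to show $\phi(E)$ is always an inversion set: biconvexity is automatic because each row is an initial interval $\{a+1,\dots,m\}$, and transitive closure holds by (E2), since $a<b\leq E(a)$ together with $b<c\leq E(b)$ gives $c\leq E(b)\leq E(a)$. Using (E1) one sees at once that $\phi$ is order-preserving, and it is order-reflecting because $\phi(E)\subseteq\phi(F)$ forces $E(a)=\max(\{a\}\cup\{b:(a,b)\in\phi(E)\})\leq F(a)$ for every $a$. Thus $\phi$ is an order-embedding, taking the bottom $k\mapsto k$ of $T_n$ to the identity and the top $k\mapsto n$ to the reverse permutation $w_0$.

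Next I identify the image and show it is a sublattice of $S_n$. Call $X$ \emph{nice} if it is transitively closed and every row of $X$ is an initial interval $\{a+1,\dots,m\}$; since the interval condition forces biconvexity, the nice sets are precisely the sets $\phi(E)$, $E\in T_n$. Nice sets are closed under intersection (a row of $X\cap Y$ is an intersection of two initial intervals, hence an initial interval, and transitive closure is inherited); as $S_n$ is a lattice \cite{GR}, $\phi(E)\wedge\phi(F)$ is the largest inversion set contained in $\phi(E)\cap\phi(F)$, and since $\phi(E)\cap\phi(F)$ is already such a set the meet equals $\phi(E)\cap\phi(F)$, which is nice. For the join one checks that the transitive closure $K$ of $\phi(E)\cup\phi(F)$ is biconvex, hence an inversion set, and clearly the smallest one containing the union, so $\phi(E)\vee\phi(F)=K$; and $K$ is nice, because if $(a,c)\in K$ there is an increasing chain $a=y_0<\dots<y_r=c$ with all $(y_i,y_{i+1})\in\phi(E)\cup\phi(F)$, and for $a<b<c$ either $b=y_i$ for some $i$ — so a prefix of the chain puts $(a,b)$ into $K$ — or $y_i<b<y_{i+1}$, in which case $(y_i,y_{i+1})\in\phi(E)$ (say) gives $y_{i+1}\leq E(y_i)$, hence $(y_i,b)\in\phi(E)$ and again $(a,b)\in K$. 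Therefore $\phi(T_n)$ is a sublattice of $S_n$ and $\phi$ is an isomorphism onto it, which is the first assertion $T_n\leq S_n$.

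For the height statement, recall that $S_n$ is graded by the number of inversions. Since $\phi$ is an order-embedding, the image of any chain $0=E_0<E_1<\dots<E$ in $T_n$ is a chain in $S_n$, so its length is at most $|\phi(E)|$; hence the height of $E$ in $T_n$ is at most $|\phi(E)|=$ height of $\phi(E)$ in $S_n$. For the converse it suffices to exhibit, for every $E$ above $0$, an element $E'<E$ in $T_n$ with $|\phi(E')|=|\phi(E)|-1$, since iterating produces a chain of length $|\phi(E)|$ from $0$ up to $E$. Put $v:=\max\{E(k):E(k)>k\}$ and let $a$ be the largest index with $a<v$ and $E(a)=v$ (such an index exists by the definition of $v$); then no index strictly between $a$ and $v$ takes the value $v$, so defining $E'(a):=v-1$ and $E'=E$ off $a$ preserves (E1) and (E2) — the only condition that could fail, (E2) at $a$, requires $E(j)<v$ for $a<j<v$, which holds by the choice of $a$ — and $\phi(E')=\phi(E)\setminus\{(a,v)\}$, as needed.

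The main obstacle is the join step. Meets and joins in $S_n$ are not the intersection and union of inversion sets, so one must invoke the correct description of $\sigma\vee\tau$ as the transitive closure of the union, verify that this closure is genuinely an inversion set (biconvexity is preserved), and then verify that the ``row-interval'' shape cutting out the image is preserved under it; the chain manipulation above is the technical core. A smaller but real point is checking that the one-step decrement in the height argument does not destroy (E2).
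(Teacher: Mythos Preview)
Your proof is correct and follows essentially the same route as the paper: encode $E$ by the inversion set $\phi(E)$ whose $k$-th row is the interval $\{k+1,\dots,E(k)\}$, characterize the image by the strengthened convexity condition (your ``each row is an initial interval'' is exactly the paper's property (I2)*), and show that this family is closed under intersection and under transitive closure of union, which are the meet and join in $S_n$. The only cosmetic difference is in the height step --- you decrement at the largest index $a$ achieving the maximal value $v=\max\{E(k):E(k)>k\}$, whereas the paper decrements at an index $j$ minimizing $E(j)-j$ --- but both choices yield a bracketing function $E'$ with $|\phi(E')|=|\phi(E)|-1$.
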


First, we want to get an easy description of a join of elements in $S_n$. For this, we will think of $I(\sigma)$ as the set of pairs $(a,b)$ in $\mathbf{n}^2$ with $a>b$. Notice that each $A=I(\sigma)$ satisfies the following properties:
\begin{itemize}
\item[(I1)] $A$ is transitive, i.e. $(a,b),(b,c) \in A$ implies $(a,c)\in A$;
\item[(I2)] if $b<c<a$ are elements from $\mathbf{n}$ and $(a,b) \in A$, then either $(a,c) \in A$, or $(c,b) \in A$. 
\end{itemize}
Indeed, the first property is evident on every $A=I(\sigma)$. As for (I2), we notice that if $a$ precedes $b$ in permutation, but $c$ does not precede $b$, then $c$ appears after $b$, hence, after $a$. It follows $(a,c) \in A$.

It turns out that properties (I1) and (I2) characterize the sets of pairs $A$ that can be represented as $I(\sigma)$. 
\begin{lem}
Suppose $A\subseteq \mathbf{n}^2$ is some set of pairs $(a,b)$ with $a>b$. If $A$ satisfies (I1) and (I2), then there exists a permutation $\sigma$ such that $A=I(\sigma)$.
\end{lem}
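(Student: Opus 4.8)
The plan is to reconstruct the permutation $\sigma$ directly from $A$ by specifying, for each value $a \in \mathbf{n}$, which elements must precede it, and then showing the resulting linear order is well-defined and has inversion set exactly $A$. For $a \in \mathbf{n}$, let me set $L(a) = \{\, c \in \mathbf{n} : c < a,\ (a,c) \notin A \,\} \cup \{\, c \in \mathbf{n} : c > a,\ (c,a) \in A \,\}$; informally, $L(a)$ is the set of elements that should appear to the left of $a$ in $\sigma$. I would define a relation $c \prec a$ iff $c \in L(a)$, extended to a putative strict order, and the main work is to verify that $\prec$ is a strict linear order on $\mathbf{n}$.

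The key steps, in order: (i) \emph{Totality}: for $c \neq a$ show exactly one of $c \prec a$, $a \prec c$ holds — this is immediate from the definition by splitting on $c<a$ versus $c>a$ (when $c<a$, we have $c \prec a$ iff $(a,c) \notin A$, i.e. $a \prec c$ iff $(a,c) \in A$; the case $c > a$ is symmetric). (ii) \emph{Irreflexivity} is clear. (iii) \emph{Transitivity}: suppose $c \prec b$ and $b \prec a$; I must show $c \prec a$. Here I would do a case analysis on the relative sizes of $a,b,c$. The cases where all the relevant pairs lie in $A$ reduce to (I1); the cases where they lie outside $A$ are handled by (I2) used contrapositively — if $(a,c)\in A$ with $b$ strictly between, then $(a,b)\in A$ or $(b,c)\in A$, contradicting one of the hypotheses; and the mixed size-orderings ($a$ largest, $c$ largest, or $b$ in the middle vs. at an end) are dispatched by combining (I1) and (I2). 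Once $\prec$ is a strict linear order, it determines a unique permutation $\sigma = \langle s_1,\dots,s_n\rangle$ (list $\mathbf{n}$ in $\prec$-increasing order). (iv) \emph{Identification of the inversion set}: by construction, for $a > b$ we have $a$ preceding $b$ in $\sigma$ iff $b \in L(a)$ iff $(a,b) \in A$, so $I(\sigma) = A$.

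The main obstacle is step (iii), the transitivity verification: it is the only place where both (I1) and (I2) are genuinely needed, and the bookkeeping over the several orderings of $\{a,b,c\}$ combined with "pair in $A$" versus "pair not in $A$" is where a careless argument could go wrong. In particular the sub-case where the middle element by size is not the middle element in the chain $c \prec b \prec a$ forces an appeal to (I2) with the right triple, and one must be careful to apply (I2) to a triple $b' < c' < a'$ that is actually witnessed in $A$. Everything else is routine once the definition of $L(a)$ is in place.
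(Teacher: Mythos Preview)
Your approach is correct and genuinely different from the paper's. The paper argues by induction on $k$: having built $\sigma_k$ on $\{1,\dots,k\}$ with $I(\sigma_k)=A\cap\mathbf{k}^2$, it inserts $k{+}1$ immediately to the left of the leftmost entry $j$ of $\sigma_k$ with $(k{+}1,j)\in A$ (or at the end if none exists), and then uses (I1) and (I2) to check that every element to the right of $k{+}1$ really forms a pair in $A$. Your method instead defines the whole linear order $\prec$ at once and pushes all the work into the six-case transitivity check; this is arguably cleaner conceptually (it makes explicit that (I1)+(I2) are exactly what is needed for $\prec$ to be an order), while the paper's inductive insertion is shorter to write and gives an explicit algorithm for producing $\sigma$. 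Both proofs use (I1) and (I2) in essentially the same places, just organized differently.

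One small slip in your write-up: in step (iv) you want ``$a$ precedes $b$ iff $a\in L(b)$'' rather than ``$b\in L(a)$''; with $a>b$ this indeed gives $(a,b)\in A$, so the conclusion $I(\sigma)=A$ stands.
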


\begin{proof}
For each $2\leq k\leq n$ we will find a permutation $\sigma_k$ on $\mathbf{k}=\{1,2,\dots,k\}$ so that $I(\sigma_k)=A\cap \mathbf{k}^2$. In particular, $I(\sigma_n)=A$.

If $k=2$, then we take $\sigma_2=id$, if $(2,1) \not \in A$, and $\sigma_2=\langle 2,1\rangle$, if $(2,1) \in A$.

Suppose we found $\sigma_k$, $k<n$, on $\mathbf{k}$ such that $I(\sigma_k)=A\cap \mathbf{k}^2$.
We need now to extend it to $\sigma_{k+1}$ on $\mathbf{(k+1)}$. Locate the most left element $j$ among those entries of $\sigma_k$ that satisfy $(k+1,j) \in A$. If there is no such $j$, then append $(k+1)$ at the end of $\sigma_k$. 
Otherwise, put $(k+1)$ immediately left of $j$. Apparently, $A\cap \mathbf{(k+1)^2} \subseteq I(\sigma_{k+1})$.
To show the inverse inclusion, we should demonstrate that every element $x$ to the right from $k+1$ in $\sigma_{k+1}$ satisfies $(k+1,x)\in A$. If $x$ immediately follows $k+1$, then $x=j$, by the definition of $\sigma_{k+1}$, so $(k+1,x) \in A$. Thus, assume that $x$ is one of other elements to the right of $k+1$ in $\sigma_{k+1}$. If $x < j$, then $(j,x) \in A$, hence $(k+1,x)\in A$ follows from the transitivity of $A$. If $j<x$, then $j<x<k+1$ and $(k+1,j) \in A$ would imply, by (I2), that either $(k+1,x) \in A$, or $(x,j) \in A$. Apparently, the latter is not the case, due to the assumption that     
$I(\sigma_k)=A\cap \mathbf{k}^2$. Hence, $(k+1,x) \in A$, which is needed.
\end{proof}

In the next Lemma, $\vee$ is used for the join operation of lattice $S_n$, and $(X)^{tr}$ is the transitive closure of set of pairs $X$. 
\begin{lem}\label{join}
If $A_1=I(\sigma_1), A_2=I(\sigma_2)$, then $A_1\vee A_2=(A_1\cup A_2)^{tr}$.
\end{lem}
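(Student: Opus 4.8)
The plan is to prove the two inclusions $A_1\vee A_2 \supseteq (A_1\cup A_2)^{tr}$ and $A_1\vee A_2\subseteq (A_1\cup A_2)^{tr}$ separately, where the join on the left is computed in $S_n$, i.e. it is the least element among all $I(\sigma)$ containing both $A_1$ and $A_2$. For the first inclusion, note that $A_1\vee A_2 = I(\tau)$ for some permutation $\tau$, hence by (I1) it is transitive; since it contains $A_1\cup A_2$, it contains the transitive closure $(A_1\cup A_2)^{tr}$ as well. This direction is immediate and uses only property (I1).

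The substantive direction is $A_1\vee A_2\subseteq (A_1\cup A_2)^{tr}$. Here the natural strategy is to show that $B:=(A_1\cup A_2)^{tr}$ is itself of the form $I(\sigma)$ for some permutation $\sigma$; once that is established, $B$ is an element of $S_n$ containing both $A_1$ and $A_2$, so by definition of join $A_1\vee A_2\subseteq B$, which is exactly what is wanted. To show $B=I(\sigma)$ it suffices, by the previous Lemma, to verify that $B$ satisfies (I1) and (I2). Property (I1) holds for $B$ by construction, since $B$ is a transitive closure. So the entire weight of the argument rests on checking (I2) for $B$.

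Thus the key step, and the one I expect to be the main obstacle, is: given $b<c<a$ in $\mathbf{n}$ with $(a,b)\in B$, show that $(a,c)\in B$ or $(c,b)\in B$. Since $(a,b)\in B=(A_1\cup A_2)^{tr}$, there is a chain $a = t_0 > t_1 > \dots > t_m = b$ (one may assume the chain is strictly decreasing, since each individual pair in $A_1\cup A_2$ is a pair of the form $(x,y)$ with $x>y$, and transitivity lets us shortcut) with each consecutive pair $(t_{i},t_{i+1})$ lying in $A_1$ or in $A_2$. The element $c$ satisfies $b<c<a$; I would locate where $c$ falls relative to the chain. If $c$ equals some $t_i$, then the initial segment of the chain witnesses $(a,c)\in B$. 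Otherwise there is an index $i$ with $t_{i+1}<c<t_i$; then the pair $(t_i,t_{i+1})$ lies in $A_1$ or $A_2$, and applying (I2) to that $A_j$ (which holds since $A_j=I(\sigma_j)$) gives $(t_i,c)\in A_j\subseteq B$ or $(c,t_{i+1})\in A_j\subseteq B$. In the first case, prepending the sub-chain $a=t_0>\dots>t_i$ gives $(a,c)\in B$; in the second case, appending the sub-chain $t_{i+1}>\dots>t_m=b$ gives $(c,b)\in B$. Either way (I2) is verified, so $B=I(\sigma)$ for some $\sigma$, and the desired inclusion follows. The only delicate point is making sure the decreasing-chain normal form of an element of $(A_1\cup A_2)^{tr}$ is legitimate and that the case split on the position of $c$ is exhaustive; both are routine once set up carefully.
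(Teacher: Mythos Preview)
Your proposal is correct and follows essentially the same route as the paper: show that $(A_1\cup A_2)^{tr}$ satisfies (I1) and (I2), then invoke the preceding lemma to conclude it equals some $I(\sigma)$ and hence dominates $A_1\vee A_2$. The only difference is cosmetic: the paper illustrates the (I2) verification on a representative chain of length four and leaves the general case to the reader, whereas you write out the general decreasing-chain argument explicitly.
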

\begin{proof}
Evidently, $(A_1\cup A_2)^{tr}\subseteq A_1\vee A_2$. To show the inverse inclusion, we need to check that 
$(A_1\cup A_2)^{tr}$ satisfies conditions (I1) and (I2). (I1) evidently holds.

To verify (I2), consider $(a,b)\in (A_1\cup A_2)^{tr}$ and $b<c<a$. Without loss of generality we assume that
$b<d_1<d_2<d_3<a$ with $(a,d_3),(d_2,d_1)\in A_1$ and $(d_3,d_2),(d_1,b) \in A_2$. The following is the argument for the case $d_1 < c < d_2$, other cases are proved similarly. Since  $(d_2,d_1)\in A_1$, we would have either $(d_2,c) \in A_1$, or $(c,d_1) \in A_1$. In the first case we would have $(a,d_3),(d_3,d_2),(d_2,c) \in A_1\cup A_2$, hence, $(a,c) \in (A_1\cup A_2)^{tr}$. In the second case, $(c,d_1),(d_1,b) \in A_1\cup A_2$, hence, 
$(c,b) \in (A_1\cup A_2)^{tr}$. Thus, we got either $(a,c)$ or $(c,b) $ in $(A_1\cup A_2)^{tr}$, and the latter set satisfies (I2).
\end{proof}

Next, we will start the process of embedding $T_n$ into $S_n$. As was noted in Preliminaries, elements of $T_n$ are associated with functions $E:\mathbf{n}\rightarrow \mathbf{n}$ satisfying properties (E1) and (E2).

For each such function $E$ we create a set of pairs $I_E \subseteq \mathbf{n}^2$ as the smallest set satisfying the following rule:\\
if $k<E(k)$, for any $1\leq k\leq n$, then $(E(k),k), (E(k)-1,k), \dots, (k+1,k) \in I_E$.

\begin{lem}\label{IE}
For each element $E \in T_n$, $I_E$ satisfies  properties \emph{(I1)} and \emph{(I2)}.
\end{lem}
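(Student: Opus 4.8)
The plan is to verify (I1) and (I2) directly for $I_E$, using the structure of $I_E$ dictated by (E1) and (E2). First I would record a convenient description of membership in $I_E$: by construction, $(a,b)\in I_E$ precisely when there is some $k$ with $b=k<a\leq E(k)$; equivalently, $(a,b)\in I_E$ iff $b<a\leq E(b)$. Indeed the generating rule only ever puts pairs of the form $(m,k)$ with $k<m\leq E(k)$ into $I_E$, and it puts all of them in, so this ``iff'' is exact. This reformulation makes both conditions tractable.

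For (I1), suppose $(a,b),(b,c)\in I_E$. From the description, $c<b\leq E(c)$ and $b<a\leq E(b)$. Applying (E2) with $k=c$ and $j=b$ (legal since $c\leq b\leq E(c)$) gives $E(b)\leq E(c)$, hence $a\leq E(b)\leq E(c)$; combined with $c<b<a$ this yields $c<a\leq E(c)$, i.e. $(a,c)\in I_E$. Note this also uses (E1) implicitly to guarantee the intervals are nonempty, but the key move is the single application of (E2).

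For (I2), take $(a,b)\in I_E$ and $b<c<a$; we must show $(a,c)\in I_E$ or $(c,b)\in I_E$. From $(a,b)\in I_E$ we have $b<a\leq E(b)$, so in particular $b<c<a\leq E(b)$, which means $c$ lies in the interval $[b,E(b)]$. Apply (E2) with $k=b$, $j=c$: since $b\leq c\leq E(b)$ we get $E(c)\leq E(b)$. Now split on the size of $E(c)$ relative to $a$: if $a\leq E(c)$, then since $c<a$ we get $(a,c)\in I_E$; if instead $a>E(c)$, then I claim $(c,b)\in I_E$, which needs $b<c\leq E(b)$ — and this we already have from $b<c<a\leq E(b)$. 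So in either branch one of the two required pairs is in $I_E$.

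The only real obstacle is getting the membership reformulation of $I_E$ stated cleanly and being careful with the weak versus strict inequalities (the generating rule is triggered by $k<E(k)$, while (E1) only gives $k\leq E(k)$, and (E2) uses closed intervals). Once the ``$(a,b)\in I_E \iff b<a\leq E(b)$'' characterization is in hand, both (I1) and (I2) are each a one-line application of (E2) followed by an elementary case split, so I expect the proof to be short.
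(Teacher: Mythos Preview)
Your proof is correct and, for (I1), identical to the paper's: both reduce to the characterization $(a,b)\in I_E \iff b<a\leq E(b)$ and a single application of (E2). For (I2), however, your case split on $a\leq E(c)$ versus $a>E(c)$ is superfluous: the justification you give in the second branch --- that $b<c<a\leq E(b)$ already yields $b<c\leq E(b)$, hence $(c,b)\in I_E$ --- does not depend on the hypothesis $a>E(c)$ at all, so $(c,b)\in I_E$ holds unconditionally. This is exactly how the paper argues, and it in fact establishes the stronger property (I2)${}^*$ (whenever $(a,b)\in I_E$ and $b<c<a$, necessarily $(c,b)\in I_E$), which the paper singles out and uses in the subsequent lemmas. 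So your detour through (E2) applied at $k=b$, $j=c$ and the comparison of $a$ with $E(c)$ is correct but unneeded; dropping it gives the paper's shorter argument and the sharper conclusion.
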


\begin{proof}
To prove transitivity of $I_E$, assume that $(a,b),(b,c) \in I_E$. Then, according to the definition, $b<a\leq E(b)$ and $c<b\leq E(c)$.
The latter inequality implies $E(b)\leq E(c)$, due to (E2). Thus, $c<a\leq E(b)\leq E(c)$ yields $(a,c) \in I_E$.

Now assume that $b<c<a$ and $(a,b)\in I_E$. Then $c<a<E(b)$, hence $(c,b)$ must be in $I_E$ as well. Therefore, (I2) holds.
\end{proof}

It follows from the definition of $I_E$, and it was used in the proof of the previous Lemma 
that the set of inversions $I_E$ corresponding to the bracketing functions $E$ satisfy a stronger property
(I2)* than usual property of (I2) of inversions:\\
(I2)* if $b<c<a$  and $(a,b) \in I_E$, then $(c,b) \in I_E$.

In the next Lemma, $\wedge$ stands for the meet operation in the lattice $S_n$.

\begin{lem}\label{wedge}
If $I_E$ and $I_F$ are two elements of $S_n$ that correspond to bracketing functions $E,F$, then
$I_E\wedge I_F=I_E \cap I_F$.

\begin{proof}
Evidently, $I_E\wedge I_F\subseteq I_E \cap I_F$. To show the inverse inclusion, we need to check that $I_E\cap I_F$ satisfy (I1) and (I2).
Apparently, the intersection of transitive sets is transitive. As for (I2), we note that $I_E$ and $I_F$ satisfy (I2)*, hence, $I_E\cap I_F$
satisfies it as well. In particular, it satisfies  a weaker property (I2).
\end{proof}
\end{lem}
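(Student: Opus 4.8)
The plan is to show that the set-theoretic intersection $I_E \cap I_F$ is itself a legitimate element of $S_n$, i.e.\ the inversion set of some permutation; once that is known, the equality drops out purely from the order structure. Since the order on $S_n$ is inclusion, the meet $I_E \wedge I_F$ is by definition the largest element of $S_n$ sitting below both $I_E$ and $I_F$, and any such element is a subset of $I_E \cap I_F$; this already gives the containment $I_E \wedge I_F \subseteq I_E \cap I_F$. Conversely, if $I_E \cap I_F$ is an element of $S_n$, then, being contained in both $I_E$ and $I_F$, it is a lower bound of the pair and hence sits below their meet, yielding $I_E \cap I_F \subseteq I_E \wedge I_F$.

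So the whole matter reduces to verifying that $I_E \cap I_F$ satisfies (I1) and (I2), after which the characterization Lemma (every set of pairs satisfying (I1) and (I2) equals $I(\sigma)$ for some $\sigma$) places it in $S_n$. Property (I1) is routine: the intersection of two transitive sets of pairs is transitive, since a witnessing pair lies in both sets separately.

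The delicate point — and the one I expect to be the main obstacle — is (I2). This is exactly the place where, for the join, one was forced to pass to a transitive closure rather than take a plain union: the conclusion of (I2) is a disjunction ``$(a,c)\in A$ or $(c,b)\in A$'', so if I only knew that $I_E$ and $I_F$ each satisfy (I2), then for $(a,b)\in I_E\cap I_F$ and $b<c<a$ the set $I_E$ might supply $(a,c)$ while $I_F$ supplies $(c,b)$, leaving neither pair in the intersection. The resolution is to invoke the stronger property (I2)* recorded just after Lemma \ref{IE}: for inversion sets coming from bracketing functions, the conclusion is the single pair $(c,b)$, with no alternative. Since both $I_E$ and $I_F$ satisfy (I2)*, from $(a,b)\in I_E\cap I_F$ and $b<c<a$ I obtain $(c,b)\in I_E$ and $(c,b)\in I_F$ separately, whence $(c,b)\in I_E\cap I_F$. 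Thus $I_E\cap I_F$ inherits (I2)*, and in particular the weaker (I2), which closes the argument and identifies $I_E\cap I_F$ as $I_E\wedge I_F$.
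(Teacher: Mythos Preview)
Your proof is correct and follows exactly the paper's approach: show $I_E\cap I_F$ satisfies (I1) and (I2) (hence lies in $S_n$), with (I1) trivial and (I2) obtained via the stronger property (I2)* enjoyed by both $I_E$ and $I_F$. Your additional remark explaining why plain (I2) would not suffice for the intersection is a nice clarification, but otherwise the argument is identical to the paper's.
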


The previous Lemma shows that $I_E\wedge I_F$ in lattice $S_n$ is a set of pairs satisfying (I1) and (I2)*.
In the next one we are showing the same for the join operation in $S_n$.

\begin{lem}\label{vee}
If $I_E$ and $I_F$ are two elements of $S_n$ that correspond to bracketing functions $E,F$, then
$I_E\vee I_F$ satisfies \emph{(I1)} and \emph{(I2)*}.
\end{lem}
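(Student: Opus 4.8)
The plan is to combine Lemma~\ref{join} with the stronger property (I2)* that $I_E$ and $I_F$ individually satisfy. By Lemma~\ref{join} we know $I_E\vee I_F=(I_E\cup I_F)^{tr}$, and (I1) is immediate since a transitive closure is transitive. So the entire content of the lemma is verifying (I2)* for the set $(I_E\cup I_F)^{tr}$: given $b<c<a$ with $(a,b)\in(I_E\cup I_F)^{tr}$, we must produce $(c,b)\in(I_E\cup I_F)^{tr}$.

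First I would unwind what it means for $(a,b)$ to lie in $(I_E\cup I_F)^{tr}$: there is a chain $a=e_0>e_1>\dots>e_m=b$ with each consecutive pair $(e_{i},e_{i+1})$ lying in $I_E\cup I_F$. Since $b<c<a$, the value $c$ sits strictly between the endpoints, so either $c$ equals one of the intermediate $e_i$ (in which case the tail of the chain from $e_i=c$ down to $b$ already witnesses $(c,b)\in(I_E\cup I_F)^{tr}$), or $c$ falls strictly between two consecutive chain elements $e_{i}>c>e_{i+1}$. In the latter case the single pair $(e_i,e_{i+1})$ lies in $I_E$ or in $I_F$; applying (I2)* for that one set to the triple $e_{i+1}<c<e_i$ gives $(c,e_{i+1})\in I_E\cup I_F$, and then concatenating with the remaining tail $e_{i+1}>\dots>b$ of the original chain yields $(c,b)\in(I_E\cup I_F)^{tr}$. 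Either way (I2)* holds, and since (I2)* implies (I2), the set satisfies (I1) and (I2), which is what we wanted; in fact we have shown the slightly stronger conclusion stated.

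The only mildly delicate point — and the one I would be most careful about — is the bookkeeping in the chain argument: making sure that when $c$ is inserted into the descending chain it really does fall in exactly one of the two described positions (because the chain is strictly decreasing and $b<c<a$), and that the replacement pair produced by (I2)* is again a pair of the required form $(\text{larger},\text{smaller})$ with both entries in $\mathbf n$, so that it legitimately belongs to $I_E\cup I_F$ and can be glued to the rest of the chain. None of this is hard, but it is the place where a sloppy write-up could hide a gap, so I would state the chain decomposition explicitly before invoking (I2)*.
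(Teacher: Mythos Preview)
Your proposal is correct and follows essentially the same approach as the paper: use Lemma~\ref{join} to identify $I_E\vee I_F$ with $(I_E\cup I_F)^{tr}$, take a descending chain witnessing $(a,b)$ in the transitive closure, locate the link in which $c$ falls, apply (I2)* to that single link, and glue the resulting pair to the tail of the chain. The paper presents this via a representative example chain $a>d_3>d_2>d_1>b$ with $d_1<c<d_2$ and an appeal to ``without loss of generality'', whereas you write out the general chain $a=e_0>e_1>\dots>e_m=b$ and also explicitly dispose of the trivial case $c=e_i$; this makes your write-up slightly more complete, but the argument is the same.
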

\begin{proof}
According to Lemma \ref{join}, $I_E\vee I_F=(I_E\cup I_F)^{tr}$, which, of course, satisfies (I1).
So we need to check (I2)* only. Consider $(a,b)\in (I_E\cup I_F)^{tr}$ and $b<c<a$.
Without loss of generality, we may assume that $b<d_1<d_2<d_3<a$ with $(a,d_3),(d_2,d_1)\in I_E$ and $(d_3,d_2),(d_1,b) \in I_F$.
Say, $d_1<c<d_2$. Since $(d_2,d_1) \in I_E$, then $(c,d_1) \in I_E$, according to (I2)*. Then $(c,d_1),(d_1,b) \in I_E\cup I_F$, and
$(c,b) \in (I_E\cup I_F)^{tr}$, due to transitivity.
\end{proof}

Finally, we want to show that all elements of $S_n$ that satisfy (I2)* correspond to some bracketing functions, i.e., 
all of them are of form $I_E$, for some $E \in T_n$.

\begin{lem}\label{I2*}
If $I$ is an element of $S_n$ that satisfies (I2)*, then $I=I_E$, for some bracketing function.
\end{lem}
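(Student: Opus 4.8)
Given $I\subseteq\mathbf{n}^2$ satisfying (I1) and (I2)*, I want to recover a bracketing function $E$ with $I=I_E$. The natural candidate is to read off from $I$, for each $k$, how far the "block starting at $k$" extends: set
\[
E(k)=\max\bigl(\{k\}\cup\{a: a>k,\ (a,k)\in I\}\bigr).
\]
First I would check (E1): trivially $E(k)\ge k$. Next (E2): suppose $k\le j\le E(k)$; I must show $E(j)\le E(k)$. If $j=k$ this is trivial, so assume $k<j\le E(k)$. Pick any $a$ with $(a,j)\in I$ and $a>j$; I need $a\le E(k)$. If $a\le E(k)$ we are done, so suppose for contradiction $a>E(k)\ge j>k$. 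From the definition of $E(k)$, since $k<j\le E(k)$, we have $(j,k)\in I$ — wait, that needs justification: actually by (I2)*, $(E(k),k)\in I$ and $k<j\le E(k)$ force $(j,k)\in I$ when $j<E(k)$, and $(j,k)=(E(k),k)\in I$ when $j=E(k)$. So $(j,k)\in I$. Now $(a,j),(j,k)\in I$ and transitivity (I1) give $(a,k)\in I$ with $a>k$, so $a\le E(k)$ by maximality — contradicting $a>E(k)$. Hence (E2) holds and $E\in T_n$.

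**Checking $I=I_E$.** By the defining rule for $I_E$: whenever $k<E(k)$, all pairs $(E(k),k),(E(k){-}1,k),\dots,(k{+}1,k)$ lie in $I_E$, and $I_E$ is the smallest such set. For the inclusion $I_E\subseteq I$: it suffices to see that $I$ itself satisfies the generating rule, since $I_E$ is smallest. Given $k$ with $k<E(k)$, by construction $(E(k),k)\in I$; then for any $c$ with $k<c<E(k)$, (I2)* applied to $(E(k),k)\in I$ gives $(c,k)\in I$. So all the generating pairs are in $I$, whence $I_E\subseteq I$. For the reverse $I\subseteq I_E$: take $(a,b)\in I$ with $a>b$. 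Then by definition $E(b)\ge a>b$, so $b<E(b)$ and the generating rule puts $(a,b)\in I_E$ since $b<a\le E(b)$. Therefore $I=I_E$.

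**Where the work is.** The only genuinely delicate point is (E2), and within it the step "$k<j\le E(k)$ implies $(j,k)\in I$," which is exactly where (I2)* (rather than the weaker (I2)) is used — this is consistent with the remark in the excerpt that $I_E$ satisfies the stronger property (I2)*, so it is no surprise that (I2)* is precisely the hypothesis needed to invert the construction. Everything else is a short unwinding of definitions. A mild subtlety worth stating carefully: one should confirm that the $E$ just produced is the \emph{unique} bracketing function with $I=I_E$, which follows because $E(k)$ is forced — from $I_E=I$ one recovers $E(k)=\max(\{k\}\cup\{a>k:(a,k)\in I\})$ by inspecting which pairs $(\cdot,k)$ appear. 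I expect no obstacle beyond bookkeeping; the statement is essentially the converse bookkeeping to Lemma~\ref{IE}.
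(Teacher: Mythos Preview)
Your proposal is correct and follows essentially the same route as the paper: define $E(k)$ as the largest $s$ with $(s,k)\in I$ (or $k$ if none), verify (E1) trivially, and for (E2) use (I2)* to get $(j,k)\in I$ and then transitivity plus maximality of $E(k)$ to rule out $E(j)>E(k)$. Your write-up is in fact a bit more complete than the paper's, which stops after checking that $E$ is a bracketing function and never explicitly verifies the equality $I=I_E$ (nor the boundary case $j=E(k)$ in (E2)); your two short inclusion arguments fill that gap cleanly.
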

\begin{proof}
Given $I$ with (I2)*, define a function $E$ as follows: for each $1\leq k\leq n$, let $E(k)$ be the maximal element $s \in \mathbf{n}$ such that
$(s,k) \in I$, if there are such $s$, and $E(k)=k$, otherwise.
We need to check that $E$ is, indeed, the bracketing function. Evidently, $k\leq E(k)$, so (E1) holds.
Now, assume that $k < j < E(k)$. Hence, $(E(k),k)$ must be in $I$, according to the definition of $I$.
This implies $(j,k) \in I$, due to (I2)*. If $E(j) > E(k)$ would be true, then $E(j)>j$ would imply $(E(j),j) \in I$, hence, by the transitivity,
$(E(j),k) \in I$. This contradicts the choice of $E(k)$ as the largest element  $s$ of $\mathbf{n}$, for which $(s,k) \in I$.
Hence, it must be $E(j)\leq E(k)$, and we are done.
\end{proof}

The following statement will be used to establish the preservation of the height. The symbol $\prec$ stands for the
covering relation.

\begin{lem}\label{cover}
For every element $I_E \in S_n$, $I_E\not = 0$, that corresponds to a bracketing function $E$, there is another bracketing function $F$
such that $I_F \prec I_E$ in the lattice $S_n$.
\end{lem}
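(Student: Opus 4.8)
The plan is to produce $F$ directly from $E$ by lowering the value of $E$ at a single, carefully chosen coordinate, and then to verify that this local change (i) keeps $F$ a bracketing function, i.e. $F$ still satisfies (E1) and (E2), and (ii) removes exactly one pair from $I_E$, which forces $I_F \prec I_E$ in $S_n$. Since $I_E \neq 0$ there is some $k$ with $k < E(k)$; among all such $k$ we want to pick one where we can safely decrease $E(k)$. The natural candidate is to take $k$ with $E(k) > k$ chosen so that $E(k)$ is as large as possible, and among those, $k$ as large as possible; intuitively this is a ``topmost, rightmost'' place to shrink a bracket. Set $F(j) = E(j)$ for $j \neq k$ and $F(k) = E(k) - 1$ (note $F(k) \geq k$, so (E1) survives automatically).

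The first verification is that $F$ satisfies (E2): if $k \leq j \leq F(k) = E(k)-1$ then in particular $k \leq j \leq E(k)$, so $E(j) \leq E(k)$ by (E2) for $E$; I must upgrade this to $F(j) \leq F(k) = E(k)-1$, i.e. rule out $E(j) = E(k)$ for $k < j \leq E(k)-1$. Here is where the maximality choice of $k$ is used: if $E(j) = E(k)$ for some such $j > k$, then $j$ is a strictly larger index with the same (maximal) top value, contradicting the choice of $k$ as the rightmost index attaining the maximal value $E(k)$. One also has to check (E2) at the index $k$ itself viewed as a possible ``$j$'' for smaller indices $i$: if $i \leq k \leq F(i) = E(i)$ we need $F(k) \leq F(i)$, which follows from $F(k) = E(k)-1 < E(k) \leq E(i)$ once we know $E(k) \leq E(i)$, and that is exactly (E2) for $E$ applied to $i \leq k \leq E(i)$ — provided $k \leq E(i)$; if instead $E(i) < k$ there is nothing to check. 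So (E2) for $F$ reduces to the maximality of $E(k)$ plus (E2) for $E$, with a small case split. This bookkeeping is the step I expect to be the main obstacle: getting the extremal choice of $k$ exactly right so that every instance of (E2) for $F$ goes through, rather than merely most of them.

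Once $F \in T_n$ is established, I compute $I_F$ from the defining rule: $I_F = \bigcup_{j} \{(s,j) : j < s \leq F(j)\}$. For $j \neq k$ the contributed pairs are identical to those for $E$, and for $j = k$ the set $\{(s,k): k < s \leq E(k)-1\}$ is obtained from $\{(s,k): k < s \leq E(k)\}$ by deleting the single pair $(E(k),k)$. Hence $I_E \setminus I_F \subseteq \{(E(k),k)\}$, and it is nonempty provided the pair $(E(k),k)$ is not re-supplied by some other index $j \neq k$; but $(E(k),k) \in I_F$ would require $j \leq k$ with $k < E(k) \leq F(j) = E(j)$, i.e. $E(j) \geq E(k)$ with $j < k$ and $j \leq k \leq E(j)$, and then (E2) for $E$ gives $E(k) \leq E(j)$, so $E(j) = E(k)$ and $j < k$ contradicts the maximality (rightmostness) of $k$ again. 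Therefore $I_F = I_E \setminus \{(E(k),k)\}$, a proper subset obtained by removing one element.

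Finally, $I_E$ covers $I_F$ in $S_n$: if $I_F \subseteq I' \subseteq I_E$ with $I'$ a valid inversion set, then $I' = I_F$ or $I' = I_E$ since $|I_E \setminus I_F| = 1$, so there is no strictly intermediate element. (One should note $I_F$ really is an element of $S_n$: by Lemma~\ref{IE} it satisfies (I1) and (I2), being of the form $I_F$ for the bracketing function $F$.) This gives $I_F \prec I_E$ with $F$ a bracketing function, completing the proof.
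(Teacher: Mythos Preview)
Your argument is correct and follows the same template as the paper's: pick an index where $E$ is not fixed, lower its value by one, and check that exactly one inversion disappears. The only real difference is in \emph{which} index is chosen. The paper takes $j$ minimizing the gap $E(j)-j$; this forces every $m$ with $j<m<E(j)$ to be a fixed point of $E$ (otherwise (E2) gives $E(m)\leq E(j)$, hence $E(m)-m<E(j)-j$), so the verification of (E2) for the modified function is essentially immediate. You instead take $k$ with $E(k)$ maximal and, among those, $k$ rightmost; your check of (E2) then uses rightmostness to exclude $E(j)=E(k)$ for $k<j\leq E(k)-1$. Both extremal selections work; the paper's makes the bookkeeping a touch shorter, while yours requires the two-tier maximality but is equally valid.

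One small point: the paragraph worrying that $(E(k),k)$ might be ``re-supplied by some other index $j\neq k$'' is unnecessary. In the description $I_F=\bigcup_m\{(s,m):m<s\leq F(m)\}$ the second coordinate of each pair \emph{is} the generating index, so the pair $(E(k),k)$ can only arise from $m=k$; once $F(k)=E(k)-1$, that pair is simply absent. The extra maximality argument you give there is not wrong, just superfluous.
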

\begin{proof}
Since $I_E \not = 0$, there is at least one $j < n$ such that $E(j)>j$. Among those choose one such that $E(j)-j$ is minimal. 
According to (E2) and the choice of $j$, if there is $k$ with $j<k<E(j)$, then $E(k)=k$.
Consider function $F: \mathbf{n}\rightarrow \mathbf{n}$ that is defined as follows: $F(i)=E(i)$, if $i\not = j$, and $F(j)=E(j)-1$, otherwise.
It is easy to see that $F$ is a bracketing function. According to definition, $I_F=I_E\setminus \{(E(j),j)\}$.
Hence, $I_F$ and $I_E$ differ by a single pair, and $I_F\prec I_E$ in $S_n$.
\end{proof}

\emph{Proof of Theorem.}
Define mapping $\phi: T_n \rightarrow S_n$ as $\phi(E)=I_E$. Then $\phi$ is one-to-one.
Indeed, if $E\not = F$ are two elements of $T_n$, then, without loss of generality, $E(k) < F(k)$, for some
$k < n$. Then, according to the definition, $(F(k),k) \in I_F$, but $(F(k),k)\not \in I_E$.
Also, $\phi$ preserves the order. Indeed, if $E\leq F$, then $E(k) \leq F(k)$, for all $k\leq n$. Then, 
apparently, all elements of the form $(s,k)$ from $I_E$ are simultaneously in $I_F$. Hence, $I_E\subseteq I_F$.

According to Lemma \ref{I2*} and a note after Lemma \ref{IE}, $\phi(T_n)$ consists exactly of elements of $S_n$ that satisfy (I2)*.
Moreover, due to Lemmas \ref{wedge} and \ref{vee}, such elements in $S_n$ form a sublattice.
It follows that $T_n$ is isomorphic to this sublattice of $S_n$. 

Finally, it follows from Lemma \ref{cover} that, for every $\phi(E)$, there is a longest chain in $S_n$ from $0$ to $\phi(E)=I_E$ 
all whose elements are in the range of $\phi$. This confirms that the height of elements in $T_n$ are preserved under $\phi$.
In particular, $\phi$ is $0,1$-embedding that maps atoms to atoms.
\emph{End of proof of Theorem.}    

\emph{Acknowledgements.} Since placing this preprint in public domain on January 7, 2011, the author received several pointers to the publications in algebraic combinatorics, in particular, to critical paper \cite{BW97}. We would like  to thank N.~Reading, H.~Thomas, F.~Sottile, J.~Stasheff and J.D.H.~Smith for their attention and comments. I hope this preprint will serve as a connector to currently well disjoint studies of same objects and will advance communications across the fields. The text of this preprint may be used in the future chapter on semidisributive lattices, in the second volume of General Lattice Theory (G.~Gr\"atzer and F.~Wehrung, Eds), 3d edition (to appear).

\end{document}